\let\proof\@undefined
\let\endproof\@undefined
\newcommand{\real}{{\mathbb{R}}}
\newcommand{\reals}{\real}
\newtheorem{theorem}{Theorem}[section]
\newtheorem{proposition}[theorem]{Proposition}
\newtheorem{remark}[theorem]{Remark}
\newcommand\oprocendsymbol{\hbox{$\bullet$}}
\newcommand\oprocend{\relax\ifmmode\else\unskip\hfill\fi\oprocendsymbol}
\title{Rebalancing the Rebalancers: Optimally Routing \\Vehicles and
  Drivers in Mobility-on-Demand Systems}
\author{Stephen L. Smith\thanks{S. L. Smith is with the Department of Electrical and Computer Engineering, University of Waterloo, Waterloo ON, N2L 3G1 Canada (\tt stephen.smith@uwaterloo.ca).} \and Marco Pavone\thanks{M. Pavone is with the Department of Aeronautics and Astronautics, Stanford University, Stanford, CA 94305, USA (\tt pavone@stanford.edu).}\and Mac Schwager\thanks{M. Schwager is with the Department of Mechanical Engineering and the Division of Systems Engineering, Boston University, Boston, MA 02215, USA, (\tt schwager@bu.edu).} \and Emilio Frazzoli \thanks{ E. Frazzoli is with the Laboratory for Information and Decision Systems, Aeronautics and Astronautics Department, Massachusetts Institute of Technology, Cambridge, MA 02139, USA
    (\tt frazzoli@mit.edu).} \and Daniela Rus \thanks{D. Rus is with the Computer Science and Artificial Intelligence Laboratory, Electrical Engineering and Computer Science Department, Massachusetts Institute of Technology, Cambridge, MA 02139, USA
    (\tt rus@csail.mit.edu).}
    }
\begin{document}
\maketitle

\begin{abstract}
  In this paper we study rebalancing strategies for a
  mobility-on-demand urban transportation system blending
  customer-driven vehicles with a taxi service. In our system, a
  customer arrives at one of many designated stations and is
  transported to any other designated station, either by driving
  themselves, or by being driven by an employed driver.  The system
  allows for one-way trips, so that customers do not have to return to
  their origin.  When some origins and destinations are more popular
  than others, vehicles will become unbalanced, accumulating at some
  stations and becoming depleted at others. This problem is addressed
  by employing rebalancing drivers to drive vehicles from the popular
  destinations to the unpopular destinations.  However, with this
  approach the rebalancing drivers themselves become unbalanced, and
  we need to ``rebalance the rebalancers'' by letting them travel back
  to the popular destinations with a customer. Accordingly, in this
  paper we study how to optimally route the rebalancing vehicles and
  drivers so that stability (in terms of boundedness of the number of
  waiting customers) is ensured while minimizing the number of
  rebalancing vehicles traveling in the network and the number of
  rebalancing drivers needed; surprisingly, these two objectives are
  aligned, and one can find the optimal rebalancing strategy by
  solving two decoupled linear programs.  Leveraging our analysis, we
  determine the minimum number of drivers and minimum number of
  vehicles needed to ensure stability in the system. Interestingly,
  our simulations suggest that, in Euclidean network topologies, one
  would need between 1/3 and 1/4 as many drivers as vehicles.
\end{abstract}

\section{Introduction}

In this paper we study vehicle routing algorithms for a novel model of
urban transportation system, which involves blending customer-driven
vehicles with a taxi service. Our proposed car-share system is an
example of a Mobility-on-Demand (MOD) system, and aims at providing
urban dwellers with the tailored service of a private automobile,
while utilizing limited urban land more efficiently (e.g., by
minimizing the automobiles that sit unused) \cite{Mitchel.Bird.ea:10}. In
our system, a customer arrives at one of many designated stations and
is transported to any other designated station, either by driving
themselves, or by being driven by an employed driver.  The system
allows for one way trips, so that customers do not have to return to
the same stations from which they picked up their vehicles.  In a
typical one way car-share system (e.g. Car2Go) it has been observed
empirically~\cite{C2G:11}, and shown
analytically~\cite{SLS-MP-EF-DR:11a}, that vehicles become unbalanced,
accumulating at popular destinations and becoming depleted at less
popular ones.  Our proposed system addresses this problem by employing
rebalancing drivers to drive vehicles from the popular destinations to
the unpopular destinations.  However, with this approach the
rebalancing drivers themselves become unbalanced, and hence we need to
``rebalance the rebalancers'' by letting them travel back to the
popular destinations with a customer.  In such a trip, the rebalancing
driver operates the vehicle as a taxi, driving the customer to their
desired destination.  The system is illustrated in
Fig.~\ref{fig:load_balancing}. The main difficulty in such a system,
and the focus of this paper, is how to determine the rebalancing trips
and the taxi trips in order to minimize wasted trips, while providing
the best possible customer experience.

Specifically, the contribution of this paper is twofold: we study
routing algorithms for the MOD system illustrated in
Fig.~\ref{fig:load_balancing} that (1) minimize the number of
rebalancing vehicles traveling in the network, (2) minimize the number
of drivers needed, and (3) ensure that the number of waiting customers
remains bounded. Second, leveraging our analysis, we determine the
relation between the minimum number of drivers needed and the minimum
number of vehicles needed to ensure stability in the system; these
relations would provide a system designer with essential structural
insights to develop business models. Interestingly, our simulations
suggest that, in Euclidean network topologies, one would need between
1/3 and 1/4 as many drivers as vehicles, and that this fraction
decreases to about 1/5 if one allows up to 3-4 drivers to take a trip
with a customer.

This paper builds upon the previous work of the authors in designing
optimal rebalancing policies for MOD systems leveraging
\emph{autonomous operation} of the
vehicles~\cite{SLS-MP-EF-DR:10j,SLS-MP-EF-DR:11a}, i.e., without the
need of human drivers.  On the contrary, the system proposed in this
paper would use technology that is available today (i.e., by employing
human drivers instead of autonomous cars), and our finding are readily
applicable to \emph{existing} one-way car-share systems, which already
employ drivers to rebalance cars using heuristic
methods~\cite{C2G:11}.  Furthermore, by comparing the results in this
paper with those in \cite{SLS-MP-EF-DR:10j}, one can quantitatively
assess the relative benefits of ``hi-tech'' autonomous MOD systems
versus ``low-tech'' driver-based MOD systems. The problem addressed in
this paper has also many characteristics in common with the well-known
Dynamic Traffic Assignment (DTA) problem
\cite{Merchant.Nemhauser:TS:78,Friesz.Luque.ea:OR89,Ziliaskopoulos:TS00,
  Srinivas.Ziliaskopoulos:NSE01}. The key difference between
rebalancing in MOD systems and the DTA problem is that in the former
the optimization is over the empty vehicle trips (i.e., the
rebalancing trips) rather than the passenger carrying trips.

The rest of the paper is structured as follows. In
Section~\ref{sec:model} we present a model for our system with
customers, vehicles, and drivers represented as a continuous fluid,
and we formally state the problem of rebalancing the vehicles and the
drivers. In Section~\ref{sec:properties} we (i) study the
well-posedness of the model and characterize its set of equilibria;
(ii) determine the minimum number of vehicles and drivers needed to
meet the customer demand; and (iii) show that with rebalancing
vehicles and drivers the system is indeed locally stable (i.e., stable
within a neighborhood of the nominal conditions).  In
Section~\ref{sec:opt_reb} we show how to optimally route the
rebalancing vehicles and drivers so that stability (in terms of
boundedness of the number of waiting customers) is ensured while
minimizing the number of rebalancing vehicles traveling in the network
and the number of rebalancing drivers needed; remarkably, these two
objectives are aligned, and one can find the optimal rebalancing
strategy by solving two decoupled linear programs. In
Section~\ref{sec:sim} we study the relation between the minimum number
of drivers needed and the minimum number of vehicles needed. Finally,
in Section~\ref{sec:conc} we give conclusions and discuss future
research directions.

\begin{figure}[htb]
\centering
  \includegraphics[width=0.9\linewidth]{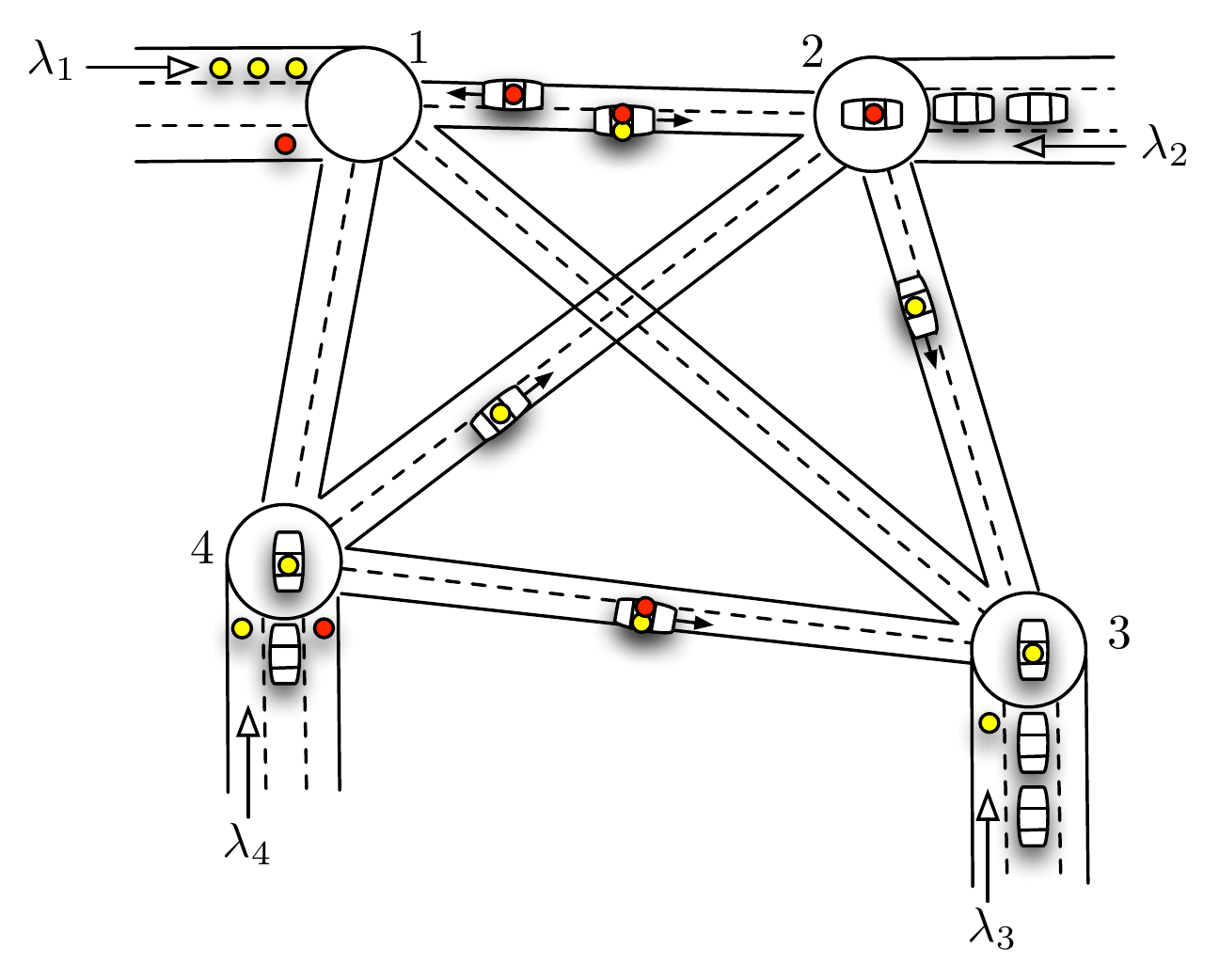}
\caption{At each station there are three queues: customers (yellow
  dots), drivers (red dots), and vehicles (small car icons).  There
  are three modes of use for a car:  A customer can drive a car
  between stations; a customer can be driven between stations by a
  driver; or, a driver can drive a car between stations to rebalance.}
  \label{fig:load_balancing}
\end{figure}

 \begin{table}[htb]
 \small
 \centering %
 \caption{Description of notation for station $i$}%
 \label{tab:parameters}
 \begin{tabular}{c|l}
   {\bf } & {\bf Definition} \\
   \hline
   $c_i$ & number of customers at station $i$\\
   $v_i$ & number of vehicles at station $i$\\
   $r_i$ & number of drivers at station $i$\\
   $\lambda_i$ & rate of arrival of customers at station $i$\\
   $\mu_i$ & departure rate from station $i$ \\
   $T_{ij}$ & travel time from station $i$ to station $j$ \\
   $p_{ij}$ & fraction of customers at station $i$ destined for station $j$ \\
    $\alpha_{ij}$ & rate of rebalancing vehicles from station $i$ to station $j$ \\
   $\gamma_i$ & $\sum_j \alpha_{ij}$ \\
   $\beta_{ij}$ & rate of rebalancing drivers from station $i$ to station $j$ \\
   $f_{ij}$ & fraction of customers traveling from $i$ to $j$ willing\\
   & to use taxis \\
   $H(\cdot)$ & Heaviside function \\
 \end{tabular}
 \end{table}

\section{Modeling the Mobility-on-Demand System}
\label{sec:model}

In our prior work~\cite{SLS-MP-EF-DR:11a} we proposed a fluid model
for mobility-on-demand systems and formulated a policy to optimally
rebalance vehicles assuming that they could operate autonomously.  In
this paper we consider rebalancing the vehicles through the use of
dedicated personnel that are employed to drive the vehicles.  In this
section we extend the fluid model in~\cite{SLS-MP-EF-DR:11a} to
capture the later scenario.

\textbf{Basic model:} The model in \cite{SLS-MP-EF-DR:11a} can be
formalized as follows.  Consider a set of $n$ stations, $\mathcal N =
\{1, \ldots, n\}$, defined over an extended geographical area (see
Figure~\ref{fig:load_balancing}). Since the model is a fluid
approximation, the number of customers, vehicles, and drivers are
represented by real numbers. Customers arrive at station $i$ at a
constant rate $\lambda_i\in \reals_{>0}$.  The number of customers at
station $i$ at time $t$ is $c_i(t) \in \reals_{\geq0}$, and the number
of vehicles waiting idle at station $i$ at time $t$ is $v_i(t)\in
\reals_{\geq 0}$. The total number of vehicles in the system is $V\in
\reals_{>0}$. The fraction of customers at station $i$ whose
destination is station $j$ is $p_{ij}$ (where $p_{ij}\in\reals_{\geq
  0}$, $p_{ii}=0$, and $\sum_{j}p_{ij}=1$). The travel time from
station $i$ to station $j$ is $T_{ij}\in \reals_{\geq 0}$. When there
are both customers and vehicles at station $i$ (i.e., $c_i(t)>0$ and
$v_i(t) >0$), then the rate at which customers (and hence vehicles)
leave station $i$ is $\mu_i$; when, instead, $c_i(t) = 0$ but
$v_i(t)>0$ the departure rate is $\lambda_i$. A necessary condition
for the total number of customers at station $i$ to remain bounded is
that $\mu_i \geq \lambda_i$; we will assume $\mu_i > \lambda_i$
throughout the paper (the case $\mu_i = \lambda_i$ can be addressed
with techniques similar to the ones introduced in this paper and is
omitted).

From~\cite{SLS-MP-EF-DR:11a}, we showed that a station is in need of rebalancing if
$-\lambda_i +\sum_{j\neq i} \lambda_j p_{ji} \neq 0$.  This can be
easily understood by noting that $\lambda_i$ is the rate at which
vehicles leave station $i$, while $\sum_{j\neq i} \lambda_j p_{ji}$ is
the rate at which vehicles arrive at station $i$.  In what follows we
assume that 
\[
-\lambda_i +\sum_{j\neq i} \lambda_j p_{ji} \neq 0 \quad \text{for all
$i\in \mathcal{N}$},
\]
and thus each station is in need of rebalancing.  We comment further
on this assumption in Remark~\ref{rem:balanced_stations}.

\textbf{Rebalancing vehicles:} In order to rebalance the number of
vehicles $v_i(t)$ at each station, vehicles without customers will be
driven between stations using hired human drivers. The number of
drivers waiting at station $i$ is $r_i(t) \in\reals_{\geq 0}$ and the
total number of drivers in the system is $R\in\reals_{>0}$.  In order
to send a vehicle without a customer on a rebalancing trip from
station $i$ to station $j$, there must be a driver present at station
$i$.  We let $\alpha_{ij}\in\reals_{\geq 0}$ denote the rate at which
we send vehicles from station $i$ to station $j$ when vehicles and
drivers are available at station $i$.  The total rate at which station
$i$ sends vehicles without customers is $\gamma_i :=
\sum_{j}\alpha_{ij}$, where $\alpha_{ii} = 0$. We let $\alpha$ denote
the matrix with entries given by $\alpha_{ij}$.  These trips are shown
in Figure~\ref{fig:load_balancing} as vehicles with red dots in them.

\textbf{Rebalancing drivers:} Finally, we must rebalance the drivers
in the network, as they will tend to accumulate at some stations and
become depleted at others.  This is done as follows.  If a driver
would like to make a trip from station $i$ to station $j$, it can
drive a car \emph{for} a customer on a trip from $i$ to $j$, thereby
acting as a taxi driver for that trip.  This allows the driver to make
the journey from station $i$ to station $j$ by ``hitching a ride'' on
a passenger-carrying trip, but without negatively affecting the
customer experience.  We quantify this using two sets of variables.
The variables $\beta_{ij}\in\reals_{\geq 0}$ give the rate at which
drivers are sent from station $i$ to station $j$ when there are idle
drivers available at station $i$.  We let $\beta$ denote the matrix
with entries given by $\beta_{ij}$ and assume $\beta_{ii}=0$.  

The quantities $f_{ij}\in(0,1]$ give the fraction of customers making
the trip from station $i$ to $j$ that would be willing to use the taxi
mode of service on their trip.  The remaining fraction of customers
$1-f_{ij}$ would prefer to drive themselves on their trip. Thus,
$f_{ij}$ imposes a constraint on the largest value of $\beta_{ij}$.
In what follows we assume that the $f_{ij}$ are such that there are
enough customer trips available to rebalance the drivers.  In
Proposition~\ref{prop:feas_exist} we give a necessary and sufficient
condition on the $f_{ij}$ such that this is true.  These trips are
shown in Figure~\ref{fig:load_balancing} as vehicles with red and
yellow dots in them.

The notation is summarized in Table~\ref{tab:parameters}.

We are now ready to write the differential equations governing the
evolution of the number of vehicles, customers, and drivers at each
station.  In order to write the expressions more compactly, we
introduce the following notation:
\begin{align*}
&v_i:=v_i(t), \quad c_i:=c_i(t), \quad r_i:=r_i(t), \\
&v_j^i:=v_j(t-T_{ji}), \quad c_j^i :=c_j(t-T_{ji}), \quad r_j^i:=r_j^i(t-T_{ij}).
\end{align*}
(In other words, $v_j^i$ denotes the number of vehicles that
\emph{were} present at station $j$, specifically $T_{ji}$ time units
\emph{prior} to the current time.) Then, we can write the customer
dynamics at station $i$ as
\[
\dot c_i = 
\begin{cases}
\lambda_i, & \text{if $v_i = 0$}, \\
0, & \text{if $v_i >0$ and $c_i = 0$}, \\
\lambda_i - \mu_i, & \text{if $v_i >0$ and $c_i >0$}.
\end{cases}
\]
Defining the Heaviside function as
\[
H(x) := \left\{ \begin{array}{rl}
 1, &\mbox{ if $x>0$}, \\
  0, &\mbox{ otherwise},
\end{array} \right.
\]
the customer dynamics can be written as
\[
\dot c_i = \lambda_i\big(1 - H(v_i)\big) + (\lambda_i -
\mu_i)H(c_i)H(v_i).
\]

The rate of change of vehicles at station $i$ can be written as the
sum of four components:
\begin{enumerate}
\item the rate at which customer-carrying vehicles depart station $i$:
\[
\begin{cases}
0, & \text{if $v_i =0$} \\
-\lambda_i, & \text{if $v_i > 0$ and $c_i = 0$}, \\
-\mu_i, & \text{if $v_i >0$ and $c_i > 0$},
\end{cases} 
\]
which can be written more compactly as
$
-\lambda_i H(v_i)  + (\lambda_i - \mu_i)H(c_i)H(v_i);
$
\item the rate at which customer-carrying vehicles arrive at station $i$:
\[
\sum_{j\neq i} p_{ji}\,\Bigl(  \lambda_j H(v_j^i) - (\lambda_j - \mu_j)H(c_j^i)H(v_j^i)  \Bigr);
\]
\item the rate at which vehicles without a customer (rebalancing
  vehicles) depart station $i$, given by $-\gamma_i H(v_i)H(r_i)$;
\item the rate at which vehicles without a customer (rebalancing
  vehicles) arrive at station $i$, given by $\sum_{j\neq i}
  \alpha_{ji}H(v_j^i)H(r_j^i)$.
\end{enumerate}
Thus, the vehicle dynamics can be written as
\begin{multline*}
\dot v_i = -\lambda_i H(v_i)  + (\lambda_i - \mu_i)H(c_i)H(v_i) \\ + \sum_{j\neq i} p_{ji}\,\Bigl(  \lambda_j H(v_j^i)  - (\lambda_j - \mu_j)H(c_j^i)H(v_j^i)  \Bigr)  \\-\gamma_i H(v_i) H(r_i) +  \sum_{j\neq i} \alpha_{ji}H(v_j^i) H(r_j^i),
\end{multline*}

Finally, the dynamics for the drivers contains four components.  The
first two components are identical to those of the rebalancing
vehicles, given by 3) and 4) above. (This is due to the fact that each
rebalancing vehicle contains a driver).  The third component is the
rate at which rebalancing drivers depart station $i$ (by driving
customer carrying vehicles): $-\sum_{j\neq i} \beta_{ij} H(v_i)
H(r_i)$.  The fourth term is the rate at which rebalancing drivers
arrive at station $i$ with a customer: $\sum_{j\neq i}
\beta_{ji}H(v_j^i) H(r_j^i)$.   Since drivers rebalance by driving
vehicles on customer trips, we have from the customer dynamics $\dot
c_i$ that
\[
\beta_{ij} \leq
\begin{cases}
f_{ij} \lambda_{i} p_{ij} & \text{if $c_i = 0$} \\
f_{ij} \mu_{i} p_{ij} & \text{if $c_i > 0$} \\
\end{cases}
\]
However, we will consider fixed values of $\beta_{ij}$, and since
$\mu_i > \lambda_i$, we simply need to enforce the more stringent
constraint $\beta_{ij} \leq f_{ij} \lambda_{i} p_{ij}$.

Therefore, the $\dot r_i$ dynamics can be written as
\begin{multline*}
\dot r_i = -\gamma_i H(v_i) H(r_i) +  \sum_{j\neq i} \alpha_{ji}H(v_j^i) H(r_j^i) \\ -\sum_{j\neq i} \beta_{ij} H(v_i) H(r_i) + \sum_{j\neq i} \beta_{ji}H(v_j^i) H(r_j^i).
\end{multline*} 

Putting everything together, we can write a set of nonlinear,
time-delay differential equations describing the evolution of
customers and vehicles in the system
as 
\begin{equation}
\label{eq:model}
\begin{split}
\dot c_i =& \lambda_i\big(1 - H(v_i)\big)  + (\lambda_i - \mu_i)H(c_i)H(v_i),\\
\dot v_i =& -\lambda_i H(v_i)  + (\lambda_i - \mu_i)H(c_i)H(v_i) + \\ &\sum_{j\neq i} p_{ji}\,\Bigl(  \lambda_j H(v_j^i)   - (\lambda_j - \mu_j)H(c_j^i)H(v_j^i)  \Bigr) \\&\qquad  -\gamma_i H(v_i) H(r_i) +  \sum_{j\neq i} \alpha_{ji}H(v_j^i)  H(r_j^i),\\
\dot r_i =& -\gamma_i H(v_i) H(r_i) +  \sum_{j\neq i} \alpha_{ji}H(v_j^i) H(r_j^i) \\ &-\sum_{j\neq i} \beta_{ij} H(v_i) H(r_i) + \sum_{j\neq i} \beta_{ji}H(v_j^i) H(r_j^i).
\end{split}
\end{equation}
where $t \geq 0$; the initial conditions satisfy $c_i(\tau) =0, \, v_i(\tau) = 0$, $r_i(\tau)=0$ for $\tau \in [- \max_{i,j}\, T_{ij}, \, 0)$, $c_i(0) \in \reals_{\geq 0}, \, v_i(0) \in \reals_{\geq 0}$ with $v_i(0)>0$ for at least one $i \in \mathcal N$,  $r_i(0) \in \reals_{\geq 0}$ with $r_i(0)>0$ for at least one $i \in \mathcal N$, and $\sum_{i} \, v_i(0) = V$ and $\sum_i r_i(0) = R$.  The optimization variables $\alpha$ and $\beta$ are constrained as follows:
\begin{align*}
0 \leq &\,\beta_{ij} \leq f_{ij} \lambda_ip_{ij} \\
0\leq &\,\alpha_{ij}.
\end{align*}

The problem we wish to solve is as follows: find an \emph{optimal} vehicle rebalancing assignment $\alpha$ and driver rebalancing assignment $\beta$ that simultaneously
\begin{enumerate}
\item minimizes the number of rebalancing vehicles traveling in the network, 
\item minimizes the number of drivers needed, \emph{and}
\item ensures that the number of waiting customers remains bounded.
\end{enumerate}
Note that this is a multi-objective optimization, and thus it is not clear that one can both minimize the number of rebalancing vehicles in the network and the number of drivers needed.  However, it will turn out that these two objectives are aligned, and one can find an assignment $(\alpha,\beta)$ that minimizes both objectives.

\section{Well-posedness, Equilibria, and Stability of Fluid Model}
\label{sec:properties}
In this section we first discuss the well-posedness of model
\eqref{eq:model} by showing two important properties, namely existence
of solutions and invariance of the number of vehicles and rebalancing
drivers along system trajectories. Then, we characterize the
equilibria, we determine the minimum number of
vehicles and drivers  to ensure their existence, and we give a necessary and sufficient condition on the ``user's preference" $f_{ij}$ such that there are enough customer trips available to rebalance the drivers. Finally, we show that rebalancing vehicles and drivers give rise to
equilibria that are locally (i.e., within a neighborhood of the
nominal conditions) stable.

\subsection{Well-posedness}

The fluid model~\eqref{eq:model} is nonlinear, time-delayed, and the
right-hand side is discontinuous. Due to the discontinuity, we need to
analyze the model within the framework of Filippov solutions (see,
e.g., \cite{Filippov:88}).  The following proposition verifies that the
fluid model is well-posed.
\begin{proposition}[Well-posedness of fluid model]
  \label{thrm:inv}
  For the fluid model~\eqref{eq:model}, the following hold:
  \begin{enumerate}
  \item For every initial condition, there exist continuous functions
    $c_i(t): \reals \to \reals_{\geq 0}$, $v_i(t): \reals \to
    \reals_{\geq 0}$, and $r_i(t): \reals \to
    \reals_{\geq 0}$ $i\in \mathcal N$, satisfying the differential
    equations~\eqref{eq:model} in the Filippov sense. %
  \item The total number of vehicles and rebalancing drivers is
    invariant for $t\geq 0$ and is equal, respectively, to $V = \sum_i
    \, v_i(0)$ and $R = \sum_{i} \, r_i(0)$.
  \end{enumerate}
\end{proposition}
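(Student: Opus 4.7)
The plan is to establish part~(1) via the Filippov framework for differential inclusions combined with a method-of-steps argument to handle the delays, and part~(2) via a direct conservation argument that tracks vehicles and drivers both at stations and in transit.

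For part~(1), I would first replace each Heaviside factor by the set-valued Filippov map $\widetilde H$ given by $\widetilde H(x)=\{1\}$ if $x>0$, $\widetilde H(x)=\{0\}$ if $x<0$, and $\widetilde H(0)=[0,1]$. The resulting right-hand side, viewed as a differential inclusion in $\reals^{3n}$, is upper semi-continuous with non-empty, convex, compact values, which is the standing hypothesis of the Filippov existence theorem \cite{Filippov:88}. To handle the delays, I would apply the method of steps: assuming $T_{ij}>0$ for $i\neq j$ and letting $T_{\min} := \min_{i\neq j} T_{ij}$, on $[0, T_{\min}]$ all delayed arguments $v_j^i, c_j^i, r_j^i$ equal zero by the initial condition, so the system reduces to a Filippov inclusion \emph{without} delay. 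Iterating the existence theorem on the intervals $[kT_{\min}, (k+1)T_{\min}]$ using the previously constructed trajectory as the new history yields a global absolutely continuous solution. To keep the state in the non-negative orthant, I would verify at each boundary hyperplane (say $v_i=0$) that a Filippov selection exists for which $\dot v_i \geq 0$; this is immediate because every outflow term in $\dot v_i$ carries the factor $H(v_i)$, so the selection $\widetilde H(0)=\{0\}$ annihilates all outflows at $v_i=0$, leaving only the non-negative arrival terms. The identical check works for $c_i$ and $r_i$.

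For part~(2), the raw sum $\sum_i v_i(t)$ is \emph{not} invariant since it ignores vehicles currently in transit. I would therefore introduce
\begin{equation*}
\mathcal V(t) := \sum_i v_i(t) + \sum_{i\neq j}\int_{t-T_{ij}}^{t}\phi_{ij}(s)\,ds,
\end{equation*}
where $\phi_{ij}(s)=p_{ij}\bigl[\lambda_i H(v_i(s))-(\lambda_i-\mu_i)H(c_i(s))H(v_i(s))\bigr]+\alpha_{ij}H(v_i(s))H(r_i(s))$ is the total (customer-carrying plus rebalancing) departure rate from $i$ to $j$ at time $s$, read off from the $\dot v_i$ equation together with $\sum_{j\neq i}p_{ij}=1$. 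Differentiating by Leibniz's rule and relabeling so that every arrival term in $\sum_i \dot v_i$ is indexed by its origin, the net outflow sum $-\sum_{i\neq j}\phi_{ij}(t)$ cancels the $+\phi_{ij}(t)$ from Leibniz's rule, and the net arrival sum $\sum_{i\neq j}\phi_{ij}(t-T_{ij})$ cancels the $-\phi_{ij}(t-T_{ij})$; hence $\dot{\mathcal V}\equiv 0$. Since no vehicle is in transit for $\tau<0$ by the hypothesis on the initial data, $\mathcal V(0)=\sum_i v_i(0)=V$, giving the claim. The analogous argument for drivers uses $\psi_{ij}(s):=(\alpha_{ij}+\beta_{ij})H(v_i(s))H(r_i(s))$ as the driver departure rate and yields invariance of $R$.

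I expect the main obstacle to lie in part~(1): verifying that at the boundary $v_i=0$ (or $r_i=0$) the Filippov regularization admits a selection keeping the state in the non-negative orthant, despite $\widetilde H(0)=[0,1]$ nominally allowing a spuriously large outflow. The crux is the structural observation that outflow terms appear \emph{multiplied} by $H(v_i)$, so the inward-pointing selection $\widetilde H(v_i)=\{0\}$ at $v_i=0$ is always admissible; choosing this selection produces a Filippov solution that never leaves the positive orthant. Part~(2), while notationally heavier, is essentially a bookkeeping exercise once the correct ``in-transit'' augmentation $\mathcal V$ is introduced, and the cancellation goes through verbatim under any measurable Filippov selection used to realize the trajectory.
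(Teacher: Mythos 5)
Your proposal is correct, and its two halves relate differently to the paper. For part~(2) you do essentially what the paper does: the paper tracks the in-transit drivers $r_{ij}(t)$ as the integral of the departure rate $(\alpha_{ij}+\beta_{ij})H(v_i)H(r_i)$ over $[t-T_{ij},t]$ and differentiates with the Leibniz rule, exactly your $\psi_{ij}$ computation; for the vehicles the paper simply defers to Proposition~3.1 of \cite{SLS-MP-EF-DR:11a}, whereas you carry out the analogous bookkeeping with $\phi_{ij}$ explicitly --- same argument, just written out. For part~(1) you take a genuinely different route: the paper disposes of existence in one line by checking the hypotheses of Theorem~II-1 of \cite{Haddad:IJM81} on functional differential inclusions with memory, while you avoid that machinery by convexifying the Heavisides, reducing the delays via the method of steps on intervals of length $\min_{i\neq j}T_{ij}$, and invoking the delay-free Filippov existence theorem \cite{Filippov:88}, adding an explicit viability check that the non-negative orthant is invariant because every outflow term is gated by $H(v_i)$, $H(r_i)$, or $H(c_i)$. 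Your route is more self-contained and makes the non-negativity claim (which the paper leaves implicit in the citation) explicit; the paper's route is shorter and does not need the delayed history to be piecewise known interval by interval. Two small points to tighten: replacing each Heaviside separately by $[0,1]$ yields a Krasovskii-type inclusion that can be strictly larger than the Filippov regularization of the full right-hand side (terms sharing the same factor $H(v_i)$ get decoupled), so to conclude existence ``in the Filippov sense'' you should apply the same upper semicontinuity/convexity argument directly to the Filippov set-valued map of \eqref{eq:model}, which satisfies the identical hypotheses; and on $[0,\min_{i\neq j}T_{ij}]$ the delayed arguments need not all vanish (they are only zero for strictly negative times, and $v_i(0)$, $r_i(0)$ may be positive), but the method of steps only requires them to be determined by the history, so this is a harmless rewording rather than a gap.
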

\begin{proof}
  To prove the first claim, it can be checked that all assumptions of
  Theorem II-1 in~\cite{Haddad:IJM81} for the existence of Filippov
  solutions to time-delay differential equations with discontinuous
  right-hand side are satisfied, and the claim follows.

  As for the second claim, the proof of the invariance of the number
  of vehicles is virtually identical to the one of Proposition 3.1 in
  \cite{SLS-MP-EF-DR:11a} and is omitted in the interest of
  brevity. We prove next the invariance of the number of rebalancing
  drivers. Let $r_{ij}(t)$, where $t\geq 0$, be the number of
  rebalancing drivers in-transit from station $i$ to station $j$
  (i.e., the rebalancing drivers for which the last station visited is
  $i$ and the next station they will visit is $j$). Clearly,
  $r_{ii}(t)=0$. Now, the total number $R(t)$ of rebalancing drivers
  in the system at time $t \geq 0$ is given, by definition, by $R(t) =
  \sum_{i=1}^n\, r_i(t) + \sum_{i,j} \, r_{ij}(t)$.  The number of
  in-transit rebalancing drivers at time $t$ is given by the integral
  over the last $T_{ij}$ time units (i.e., the time to get from
  station $i$ to station $j$) of the rebalancing driver departure
  \emph{rate} from station $i$ to station $j$. Such departure rate is
  the sum of the departure rate of rebalancing vehicles (since each
  rebalancing vehicle contains a rebalancing driver) and of the
  departure rate of rebalancing drivers that drive customer-carrying
  vehicles; hence, one can express $r_{ij}(t)$ as
\begin{multline}
\label{eq:transVeh}
r_{ij}(t) = \int_{t-T_{ij}}^t \underbrace{\alpha_{ij}H(v_i(\tau))H(r_i(\tau))}_{\text{rate of drivers on rebalancing vehicles}} \,+ \\
\underbrace{\beta_{ij}H(v_i(\tau))H(r_i(\tau))}_{\text{rate of drivers on customer-carrying vehicles}} \,d\tau.
\end{multline}
By applying the Leibniz integral rule, one can write
\begin{equation*}
\begin{split}
\dot r_{ij}(t) &=  (\alpha_{ij} + \beta_{ij})\Bigl(H(v_i)H(r_i)  - H(v_i^j)H(r_i^j)\Bigr).
 \end{split}
\end{equation*}
Therefore, one immediately obtains, for $t\geq 0$,
\begin{align*}
\dot  R(t) &= \sum_{i=1}^n \dot r_i(t) +\sum_{i=1}^n\sum_{j=1}^n\dot
r_{ij}(t) \\
&= - \sum_{i=1}^n \sum_{j=1}^n (\alpha_{ij} + \beta_{ij})H(v_i)H(r_i)
\, + \\ 
&\qquad \sum_{i=1}^n \sum_{j=1}^n (\alpha_{ji} + 
\beta_{ji})H(v_j^i)H(r_j^i) \,+ \sum_{i=1}^n\sum_{j=1}^n\dot r_{ij}(t) \\&= 0.
\end{align*}

This proves the claim.
\end{proof}

\subsection{Equilibria}

The following result characterizes the equilibria of
model~\eqref{eq:model}.  Recall that no station is exactly balanced,
and thus $-\lambda_i +\sum_{j\neq i} \lambda_j p_{ji} \neq 0$, for all
$i\in \mathcal{N}$.

\begin{theorem}[Existence of equilibria]
  \label{cor:gen}
  Let $\mathcal A \times \mathcal B$ be the set of assignments
  $(\alpha,\beta)$ that verify the equations
\begin{align}
\label{eq:cont_constr}
\sum_{j\neq i}(\alpha_{ij} - \alpha_{ji}) &= D_i,\\
\label{eq:cont_constr2}
\sum_{j\neq i}(\beta_{ij}-\beta_{ji}) &= -D_i,
\end{align}
for each $i \in \mathcal N$, where $D_i :=-\lambda_i +\sum_{j\neq i}
\lambda_j p_{ji}$.  Moreover, let
\begin{align*}
V_{\alpha} &:= \sum_{i,j} \,T_{ij}\,(p_{ij}\lambda_i + \alpha_{ij}),
\quad \text{and} \\
R_{\alpha,\beta} &:= \sum_{i,j} \,T_{ij}\,(\alpha_{ij} + \beta_{ij}).  
\end{align*}
If $(\alpha,\beta) \notin \mathcal A\times \mathcal B$, then no equilibrium exists.  If $(\alpha,\beta) \in \mathcal A\times \mathcal B$, there are two cases:
 \begin{enumerate}
 \item If $V>V_{\alpha}$ and $R > R_{\alpha,\beta}$, then the set of 
  equilibria is\[
  c_i=0, \qquad v_i > 0, \quad r_i >0 \qquad \forall\; i\in \mathcal N,
  \]
  where $\sum_i v_i =  V - V_{\alpha}$ and $\sum_i r_i =  R - R_{\alpha,\beta}$. 
  \item If $V\leq V_{\alpha}$ or $R \leq R_{\alpha,\beta}$, then no equilibrium exists.
  \end{enumerate}
\end{theorem}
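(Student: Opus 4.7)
The strategy is to directly analyze the equilibrium conditions $\dot c_i = \dot v_i = \dot r_i = 0$ in~\eqref{eq:model}, combining pointwise station balances with the global conservation of vehicles and drivers established in Proposition~\ref{thrm:inv}. I would first pin down the equilibrium state: if $v_i = 0$ then $\dot c_i = \lambda_i > 0$ rules out a steady state, so $v_i > 0$ for every $i$, and the customer equation then reduces to $(\lambda_i - \mu_i)H(c_i) = 0$, which forces $c_i = 0$ since $\mu_i > \lambda_i$. An analogous argument, carried out for a Filippov selection $s_i = H(r_i)\in[0,1]$ at the boundary, shows that $r_i = 0$ is incompatible with $D_i \neq 0$: the equations $\dot v_i = 0$ and $\dot r_i = 0$ yield two expressions for $s_i$ whose simultaneous solvability fails under a short sign analysis on $D_i$, so $r_i > 0$ for every $i$ as well.

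With $H(v_i) = H(r_i) = 1$ throughout, setting $\dot v_i = 0$ reduces directly to $\sum_{j\neq i}(\alpha_{ij} - \alpha_{ji}) = D_i$, which is~\eqref{eq:cont_constr}; substituting this into $\dot r_i = 0$ eliminates the $\alpha$ contributions and yields $\sum_{j\neq i}(\beta_{ij} - \beta_{ji}) = -D_i$, which is~\eqref{eq:cont_constr2}. Hence $(\alpha,\beta)\in\mathcal A\times\mathcal B$ is necessary, proving the first dichotomy. To get the total mass conditions, I would apply the in-transit accounting from the proof of Proposition~\ref{thrm:inv}: at steady state the vehicle flow from $i$ to $j$ is constant and equal to $p_{ij}\lambda_i + \alpha_{ij}$, while the driver flow equals $\alpha_{ij} + \beta_{ij}$, giving in-transit populations $T_{ij}(p_{ij}\lambda_i + \alpha_{ij})$ and $T_{ij}(\alpha_{ij} + \beta_{ij})$ respectively. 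Invoking the invariants $V = \sum_i v_i + \sum_{i,j} v_{ij}$ and $R = \sum_i r_i + \sum_{i,j} r_{ij}$ then gives $\sum_i v_i = V - V_\alpha$ and $\sum_i r_i = R - R_{\alpha,\beta}$; strict positivity of every $v_i$ and $r_i$ forces $V > V_\alpha$ and $R > R_{\alpha,\beta}$, so failure of either inequality precludes an equilibrium and settles case~(2).

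For sufficiency in case~(1), given $(\alpha,\beta)\in\mathcal A\times\mathcal B$ with $V > V_\alpha$ and $R > R_{\alpha,\beta}$, any positive allocation of $V-V_\alpha$ across the $v_i$ and of $R-R_{\alpha,\beta}$ across the $r_i$, paired with the in-transit quantities above as a constant history on $[-\max_{i,j}T_{ij},0]$, makes every right-hand side of~\eqref{eq:model} vanish. The main obstacle I anticipate is the rigorous exclusion of degenerate Filippov equilibria on the boundary $r_i = 0$: because the Heaviside evaluates to a set-valued $[0,1]$ there, ruling out sustained steady states in which some drivers are depleted and departures proceed at reduced effective rates requires a careful case split on $\sign(D_i)$ rather than a single algebraic manipulation, and this is the most technically delicate portion of the argument.
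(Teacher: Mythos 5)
Your proposal is correct and follows essentially the same route as the paper's proof: equilibrium forces $c_i=0$, $v_i>0$ from the customer equation, a sign analysis on $D_i$ through the $\alpha$- and $\beta$-balance relations rules out $r_i=0$, necessity of $(\alpha,\beta)\in\mathcal A\times\mathcal B$ follows with $H(v_i)=H(r_i)=1$, and the in-transit accounting $T_{ij}(p_{ij}\lambda_i+\alpha_{ij})$ and $T_{ij}(\alpha_{ij}+\beta_{ij})$ combined with the invariance of $V$ and $R$ settles both the mass conditions and case (2). Your explicit Filippov set-valued treatment of the boundary $r_i=0$ is simply a more careful rendering of the same sign-of-$D_i$ argument the paper uses, not a different method.
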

\begin{proof}

  To prove the theorem, we set $\dot c_i = 0$, $\dot v_i = 0$, and
  $\dot r_i =0$ for all $i\in \mathcal N$.  From the $\dot c_i = 0$
  equations we obtain
\begin{equation}
\label{eq:lambda_eqm}
\lambda_i = \lambda_i H(v_i) - (\lambda_i - \mu_i) H(v_i)
H(c_i).
\end{equation}
Since $\lambda_i < \mu_i$, the above equations have a solution only if
\[
c_i=0 \quad \text{and} \quad v_i > 0 \quad \text{$\forall\; i \in \mathcal N$}.
\]
Setting $\dot v_i = 0$, combined with \eqref{eq:lambda_eqm} and the
fact that in equilibrium $c_i=0$ and $v_i$ is a positive constant, we
obtain
\begin{equation}
\label{eq:alpha_constraint}
\sum_{j\neq i}\big( \alpha_{ij} H(r_i) - \alpha_{ji} H(r_j) \big) =
D_i,
\end{equation}
where $D_i :=-\lambda_i +\sum_{j\neq i} \lambda_j p_{ji}$.  Finally,
setting $\dot r_i = 0$, combined with the fact that $v_i >0$ in
equilibrium, we obtain
\begin{equation}
\label{eq:beta_constraint}
\begin{aligned}
&\sum_{j\neq i}\big( \alpha_{ij} H(r_i) - \alpha_{ji} H(r_j) \big)\\ &= -
\sum_{j\neq i}\big( \alpha_{ij} H(r_i) - \alpha_{ji} H(r_j) \big) 
= -D_i.
\end{aligned}
\end{equation}

Now, consider any station $i$, and note that by assumption we have
$D_i \neq 0$.  If $D_i >0$ then from~\eqref{eq:alpha_constraint} we
see that $r_i >0$ in equilibrium.  Alternatively, if $D_i < 0$, then
from~\eqref{eq:beta_constraint} we see that $r_i>0$.  Therefore, in
equilibrium $r_i >0$.

We have shown that all equilibria are of the form $c_i=0$, $v_i >0$,
and $r_i>0$, for each $i\in\mathcal{N}$.  A necessary condition for
the existence of equilibria is that the rebalancing assignments
$\alpha$ and $\beta$ can be chosen such that they lie in the set
$\mathcal{A} \times \mathcal{B}$ of assignments that verify
\begin{align*}
\sum_{j\neq i}(\alpha_{ij} - \alpha_{ji}) &= D_i, \\
\sum_{j\neq i}(\beta_{ij}-\beta_{ji}) &= -D_i,
\end{align*}
for each $i\in\mathcal N$.  If $(\alpha,\beta)\notin \mathcal A\times
\mathcal B$, then no equilibrium exists and the first claim is proven.

Assume now that $(\alpha,\beta) \in \mathcal A\times \mathcal B$ and
assume that $V>V_{\alpha}$ and $R> R_{\alpha,\beta}$.  We need to show
that $c_i=0$, $v_i> 0$, and $r_i >0$ for all $i\in \mathcal N$ are
indeed valid equilibria.  The necessary conditions in
equations~\eqref{eq:cont_constr} and~\eqref{eq:cont_constr2} are
clearly satisfied and thus we simply need to verify that the number of
vehicles and drivers are sufficient to support the equilibrium
configuration.  But, we showed in~\cite{SLS-MP-EF-DR:11a} that
$V_{\alpha}$ is exactly the equilibrium number of vehicles in transit.
Similarly, from equation \eqref{eq:transVeh} we can verify that
$R_{\alpha,\beta}$ is the equilibrium number of drivers in transit.
This, together with the invariance result in Theorem \ref{thrm:inv},
shows the second claim.
  
  Finally, we can show that if $(\alpha,\beta) \in
  \mathcal A \times \mathcal B$ but $V\leq V_{\alpha}$ or $R\leq
  R_{\alpha,\beta}$, then no equilibrium exists, by arguing that in
  this case there is not a sufficient number of vehicles and/or
  drivers to support the equilibrium.
\end{proof}

\begin{remark}[Balanced stations case]
\label{rem:balanced_stations}
We have assumed that $D_i =-\lambda_i +\sum_{j\neq i}\lambda_jp_{ji} \neq 0$ for
each station $i$.  This assumption removes the pathological case that
a station is perfectly balanced and does not need any rebalancing
effort.  In the case that $D_i=0$ for a station, then $r_i=0$
becomes a valid equilibrium.  Due to space constraints we have omitted
a full treatment of the $D_i=0$ case in this presentation. \oprocend
\end{remark}

One question remains; does there always exist an assignment
$(\alpha,\beta)\in \mathcal A\times \mathcal B$ that satisfies the
constraints $\alpha_{ij} \geq 0$, and $0\leq \beta_{ij} \leq
f_{ij}\lambda_i p_{ij}$ for each $i,j\in \mathcal N$?  We call such an
assignment \emph{feasible}.   It is straightforward to
verify that a feasible assignment for $\alpha$ always exists, since the
variables are constrained only to be non-negative~\cite{SLS-MP-EF-DR:11a}.  The
$\beta$ variables, however, are bounded from above (that is, they have
finite capacities), and thus it is not clear whether there exists a
feasible $\beta$ assignment.  The following result gives a standard
condition for the existence of a feasible assignment (see, for
example~\cite[p.\ 220]{BK-JV:07} and a consequence of this condition.
\begin{proposition}[Existence of a feasible assignment]
  \label{prop:feas_exist}
  A feasible assignment $(\alpha,\beta)$ exists if and only if, 
  \begin{equation}
    \label{eq:feas_existence}
  -\sum_{i\in S} D_i \leq \sum_{i\in S, j\notin S} f_{ij} \lambda_i
  p_{ij} \quad \text{for every set $S\subseteq \mathcal N$},
  \end{equation}
  where $D_i = -\lambda_i + \sum_{j\neq i} \lambda_j p_{ji}$.  As a
  consequence, if $f_{ij} = 1$ for all $i,j,\in\mathcal N$, then a
  feasible assignment always exists.
\end{proposition}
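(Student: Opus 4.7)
The plan is to view the $\beta$ constraints as a transshipment (feasible-flow) problem on the complete digraph with vertex set $\mathcal N$, arc capacities $u_{ij} := f_{ij}\lambda_i p_{ij}$, and node net-supplies $b_i := -D_i$. The $\alpha$ variables pose no difficulty since only non-negativity is required, and an $\alpha \in \mathcal A$ exists by the argument already cited from \cite{SLS-MP-EF-DR:11a} (a transportation problem with no upper capacities is always feasible when node supplies sum to zero). So the entire content of the ``if and only if'' statement reduces to feasibility of the $\beta$ flow.

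As a preliminary step, I would verify that $\sum_i b_i = 0$. Expanding
$$\sum_i D_i = -\sum_i \lambda_i + \sum_i \sum_{j \neq i} \lambda_j p_{ji}$$
and swapping the order of summation in the second term while using $\sum_{j} p_{ij} = 1$ gives $\sum_i D_i = 0$, so the supplies are balanced and the transshipment is well-posed. I would then invoke the classical Gale--Hoffman feasibility theorem (equivalently, max-flow min-cut applied after attaching a super-source to every node with $b_i > 0$ and a super-sink to every node with $b_i < 0$): a feasible flow $\beta$ with $0 \leq \beta_{ij} \leq u_{ij}$ satisfying $\sum_{j \neq i}(\beta_{ij} - \beta_{ji}) = b_i$ exists iff
$$\sum_{i \in S} b_i \;\leq\; \sum_{i \in S,\, j \notin S} u_{ij}\qquad \forall\, S \subseteq \mathcal N.$$
Substituting back $b_i = -D_i$ and $u_{ij} = f_{ij}\lambda_i p_{ij}$ yields exactly condition \eqref{eq:feas_existence}, proving the main equivalence.

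For the consequence, I would carry out a short direct calculation. Starting from the definition of $D_i$ and using $\sum_j p_{ij} = 1$ together with a reindexing of the double sum,
$$-\sum_{i \in S} D_i \;=\; \sum_{i \in S} \lambda_i - \sum_{i \in S,\, j \neq i} \lambda_j p_{ji} \;=\; \sum_{i \in S,\, j \notin S} \lambda_i p_{ij} - \sum_{i \notin S,\, j \in S} \lambda_i p_{ij}.$$
The second term is non-negative, so the left-hand side is at most $\sum_{i \in S,\, j \notin S} \lambda_i p_{ij}$. With $f_{ij} \equiv 1$ this is precisely the right-hand side of \eqref{eq:feas_existence}, so the condition holds for every $S$ and a feasible assignment exists.

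I expect the main obstacle to be purely bookkeeping: keeping the sign convention consistent so that the Gale--Hoffman inequality comes out with $-\sum_{i \in S} D_i$ on the left (rather than $+\sum_{i \in S} D_i$), and carefully splitting ``$i \in S,\, j \neq i$'' into $j \in S$ and $j \notin S$ pieces when verifying the $f_{ij} \equiv 1$ consequence. Both are mechanical once the supply/demand interpretation is fixed.
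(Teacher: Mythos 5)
Your proposal is correct and follows essentially the same route as the paper: the equivalence is exactly the standard Gale--Hoffman feasibility condition for the capacitated $\beta$-transshipment (the paper cites Korte--Vygen, p.~220, for precisely this), and your $f_{ij}\equiv 1$ verification is the same summation/reindexing argument, just phrased as bounding $-\sum_{i\in S} D_i$ rather than showing $\sum_{i\in S} D_i + \sum_{i\in S, j\notin S}\lambda_i p_{ij} \ge 0$. Your explicit check that $\sum_i(-D_i)=0$ is a small, welcome detail the paper leaves implicit.
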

\begin{proof}
  The condition~\eqref{eq:feas_existence} is a standard condition for
  the existence of a feasible solution in a minimum cost flow
  problem~\cite[p.\ 220]{BK-JV:07}.

  Now we show that if $f_{ij}=1$ for all $i,j\in\mathcal N$,
  then~\eqref{eq:feas_existence} is satisfied.  Take any subset
  $S\subseteq \mathcal{N}$ and let us show that
  \begin{align*}
    \sum_{i\in S} D_i + \sum_{i\in S, j\notin S} \lambda_i p_{ij} \geq
    0 .
  \end{align*}
  From the definition of $D_i$, the left-hand side of the above
  expression can be written as
  \begin{align*}
    &- \sum_{i\in S} \lambda_i + \sum_{i\in S,j\in\mathcal N} \lambda_j
    p_{ji} + \sum_{i\in S, j\notin S} \lambda_i p_{ij} \\
    & = - \sum_{i,j\in S} \lambda_ip_{ij} + \sum_{i\in S,j\in\mathcal N} \lambda_j
    p_{ji} \\
    & = - \sum_{i,j\in S} \lambda_ip_{ij} + \sum_{i\in S,j\in\mathcal N} \lambda_j
    p_{ji} \\
    & = \sum_{i\notin S,j\in S} \lambda_i p_{ij} \geq 0.
  \end{align*}
  This proves the feasibility when $f_{ij} = 0$ for all $i,j\in
  \mathcal{N}$. 
\end{proof}

\smallskip

\subsection{Stability of Equilibria}

In this section we investigate the (local) \emph{stability} of the
equilibria of our model. We consider the following notion of local
stability. Let $(\alpha, \beta) \in \mathcal A \times \mathcal B$ and
assume $V>V_{\alpha}$ and $R>R_{\alpha,\beta}$ (this is a necessary
and sufficient condition to have equilibria, see
Theorem~\ref{cor:gen}). We say that the (non-empty) set of equilibria
\begin{equation}
\label{eq:equil_set}
\begin{split}
&\mathcal E_{\alpha, \beta}
:= \big\{(\mathbf{c}, \mathbf{v}, \mathbf{r})\in \reals^{3n}\, \big| \, c_i = 0, v_i>0, r_i>0
\text{ for all } \\&\qquad   i\in \mathcal N, \text{ and }\sum_i v_i =
V-V_{\alpha} \text{ and } \sum_i r_i = R-R_{\alpha,\beta}\big\}
\end{split}
\end{equation}
is locally asymptotically stable if for any equilibrium
$(\underline{\mathbf{c}}, \underline{\mathbf{v}}, \underline{\mathbf{r}}) \in \mathcal
E_{\alpha,\beta} $ there exists a neighborhood $ \mathcal
B^{\delta}_{\alpha, \beta} (\underline{\mathbf{c}},
\underline{\mathbf{v}},\underline{\mathbf{r}}):=\{(\mathbf{c}, \mathbf{v}, \mathbf{r})\in \reals^{3n}\, |
\, c_i \geq 0, v_i\geq 0, r_i\geq 0 \text{ for all } i \in \mathcal N,
\|(\mathbf{c} - \underline{\mathbf{c}}, \mathbf{\mathbf{v}} -
\underline{\mathbf{v}}, \mathbf{\mathbf{r}} -
\underline{\mathbf{r}}) \|<\delta, \text{ and } \sum v_i
= V - V_{\alpha} \text{ and } \sum r_i
= R- R_{\alpha, \beta}\} $ such that every evolution of model \eqref{eq:model}
starting at
\begin{equation}\label{eq:init_stab_0}
\begin{split}
  &c_i(\tau) =  \underline{c}_i \text{ for } \tau \in [-\max_{i,j} T_{ij},\,0)\\
  &v_i(\tau) = \underline{v}_i  \text{ for } \tau \in [-\max_{i,j} T_{ij},\,0)\\
  &r_i(\tau) = \underline{r}_i  \text{ for } \tau \in [-\max_{i,j} T_{ij},\,0)\\
  &(\mathbf{c}(0), \mathbf{v}(0), \mathbf{r}(0)) \in \mathcal
  B^{\delta}_{\alpha, \beta}(\underline{\mathbf{c}}, \underline{\mathbf{v}}, \underline{\mathbf{r}}))
\end{split}
\end{equation}
has a limit which belongs to the equilibrium set. In other words,
$\big(\lim_{t\to+\infty} \mathbf{c}(t),
\lim_{t\to+\infty}\mathbf{v}(t), \lim_{t\to+\infty}\mathbf{r}(t)\big)\in \mathcal E_{\alpha, \beta}$. The next theorem characterizes stability. 
\begin{theorem}[Stability of equilibria]\label{thrm:loc_stability}
 Let $(\alpha,\beta)\in\mathcal A \times \mathcal B$ be a feasible assignment, and assume $V>V_{\alpha}$ and $R>R_{\alpha,\beta}$; then, the set of equilibria $\mathcal E_{\alpha,\beta}$ is locally asymptotically stable.
\end{theorem}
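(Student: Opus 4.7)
The plan is a bootstrap argument: starting from $(\mathbf{c}(0),\mathbf{v}(0),\mathbf{r}(0)) \in \mathcal{B}^\delta_{\alpha,\beta}(\underline{\mathbf{c}}, \underline{\mathbf{v}}, \underline{\mathbf{r}})$ with $\delta$ sufficiently small, I would show that $v_i(t), r_i(t) > 0$ for all $t \geq 0$; under this invariant every Heaviside in \eqref{eq:model} simplifies, the customer population drains in finite time, the driver distribution is frozen, and the vehicle distribution converges to a configuration satisfying the conservation laws behind Proposition~\ref{thrm:inv}.

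First, I would define $t^\star := \sup\{T \geq 0 : v_i(t), r_i(t) > 0 \text{ for all } i\in\mathcal{N} \text{ and } t\in[0,T)\}$. Continuity of Filippov solutions together with the choice $\delta < \min_i \min\{\underline{v}_i/2,\; \underline{r}_i\}$ guarantees $t^\star > 0$. The key observation is that on $[0, t^\star)$ every Heaviside $H(v_i), H(r_i), H(v_j^i), H(r_j^i)$ equals $1$: for the delayed arguments, either $t - T_{ji} < 0$ and the history in \eqref{eq:init_stab_0} supplies $\underline{v}_j, \underline{r}_j > 0$, or $t - T_{ji} \geq 0$ and positivity follows from the definition of $t^\star$. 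Substituting into \eqref{eq:model} and invoking the flow-conservation identities \eqref{eq:cont_constr}--\eqref{eq:cont_constr2}, the model collapses to
\begin{equation*}
\dot c_i = -(\mu_i-\lambda_i) H(c_i), \qquad \dot r_i = 0, \qquad \dot v_i = -(\mu_i-\lambda_i) H(c_i) + \sum_{j\neq i} p_{ji}(\mu_j-\lambda_j) H(c_j^i).
\end{equation*}
Consequently each $c_i(t)$ drains linearly to zero in time at most $c_i(0)/(\mu_i-\lambda_i)$; $r_i(t) \equiv r_i(0) \geq \underline{r}_i - \delta > 0$; and $v_i(t) \geq v_i(0) - \int_0^t(\mu_i-\lambda_i) H(c_i(s))\,ds \geq \underline{v}_i - 2\delta > 0$. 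Thus the positivity assumption is maintained with uniform slack, which contradicts $t^\star$ being finite and forces $t^\star = +\infty$.

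For convergence, once $t$ exceeds $T_c + \max_{ij} T_{ij}$ with $T_c := \max_i c_i(0)/(\mu_i-\lambda_i)$, both $H(c_i(t))$ and each delayed $H(c_j^i(t))$ vanish, so $\dot v_i = 0$ and the trajectory is frozen at some $(\mathbf{0}, \mathbf{v}^\star, \mathbf{r}^\star)$ with strictly positive entries. Summing the reduced $\dot v_i$ over $i$ and using $\sum_i p_{ji} = 1$ yields $\sum_i v_i(t) \equiv \sum_i v_i(0) = V - V_\alpha$, and similarly $\sum_i r_i \equiv R - R_{\alpha,\beta}$, so the limit lies in $\mathcal E_{\alpha,\beta}$. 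The main obstacle in the argument is the sign-indefinite delayed-arrival term in $\dot v_i$: a priori one might worry that deliveries scheduled long in the past could inflate the excursion of $v_i$ beyond $O(\delta)$, but the history convention $c_j(\tau) = 0$ for $\tau < 0$ in \eqref{eq:init_stab_0} caps $\int_0^\infty H(c_j^i(s))\,ds$ by $c_j(0)/(\mu_j-\lambda_j) \leq \delta/(\mu_j-\lambda_j)$, which is precisely what keeps every perturbation scaling linearly in $\delta$ and closes the bootstrap.
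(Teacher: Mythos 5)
Your overall route is the paper's route: reduce \eqref{eq:model} under the standing hypothesis $v_i,r_i>0$ (so every Heaviside in $v$ and $r$ equals one, using the equilibrium history for the delayed arguments), use \eqref{eq:cont_constr}--\eqref{eq:cont_constr2} to cancel the constant terms, observe that $c_i$ drains in time $c_i(0)/(\mu_i-\lambda_i)$, that $\dot r_i\equiv 0$, that $v_i$ can lose at most $c_i(0)$ before the drain completes, and that after $T_c+\max_{ij}T_{ij}$ the state is frozen; your choice $\delta<\min_i\min\{\underline{v}_i/2,\underline{r}_i\}$ plays exactly the role of the paper's choice $\delta=\min_i\min(\underline{r}_i,\underline{v}_i\sin\frac{\pi}{4})$, namely guaranteeing $c_i(0)<v_i(0)$ and $r_i(0)>0$ so the bootstrap never fails. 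One cosmetic remark: the delayed arrival term $\sum_{j\neq i}p_{ji}(\mu_j-\lambda_j)H(c_j^i)$ is nonnegative (because $\mu_j>\lambda_j$), not sign-indefinite; its nonnegativity is precisely what licenses your lower bound $v_i(t)\geq v_i(0)-c_i(0)$.

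There is, however, one step that is wrong as stated: the claim that summing the reduced $\dot v_i$ over $i$ and using $\sum_{i\neq j} p_{ji}=1$ yields $\sum_i v_i(t)\equiv \sum_i v_i(0)$. Summing gives $\sum_i\dot v_i(t)=-\sum_i(\mu_i-\lambda_i)H(c_i(t))+\sum_j(\mu_j-\lambda_j)\sum_{i\neq j}p_{ji}H\bigl(c_j(t-T_{ji})\bigr)$, and the delayed Heavisides are evaluated at different times for different $i$, so they do not recombine with the undelayed term; indeed $\sum_i v_i(t)$ is genuinely not conserved along the transient, because while some $c_i>0$ the stations discharge at rate $\mu_i>\lambda_i$ and the in-transit vehicle population temporarily exceeds $V_{\alpha}$, so $\sum_i v_i(t)$ dips below $V-V_{\alpha}$ before returning to it. Only the limit equals $V-V_{\alpha}$, and you must argue that separately. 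The fix is short and you already hold the key ingredient: integrate $\sum_i \dot v_i$ over $[0,\infty)$; since the history in \eqref{eq:init_stab_0} has $c_j\equiv 0$ on $[-\max_{i,j}T_{ij},0)$, one has $\int_0^\infty H\bigl(c_j(s-T_{ji})\bigr)\,ds=\int_0^\infty H\bigl(c_j(s)\bigr)\,ds$ for every $i$, and then $\sum_{i\neq j}p_{ji}=1$ makes the two contributions cancel exactly, giving $\lim_{t\to\infty}\sum_i v_i(t)=\sum_i v_i(0)=V-V_{\alpha}$. Alternatively, do what the paper does and invoke the invariance of the total vehicle count (Proposition~\ref{thrm:inv}) together with the constraint $\sum_i v_i(0)=V-V_{\alpha}$ built into $\mathcal B^{\delta}_{\alpha,\beta}$, noting that the in-transit count converges back to $V_{\alpha}$ once all Heavisides are one and the customers have drained. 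Your treatment of the drivers is fine as is, since $\dot r_i\equiv 0$ under the bootstrap makes $\sum_i r_i(t)=R-R_{\alpha,\beta}$ genuinely constant.
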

\begin{proof}
  Consider an equilibrium
  $(\underline{\mathbf{c}},\underline{\mathbf{v}}, \underline{\mathbf{r}}) \in \mathcal
  E_{\alpha,\beta}$ (note that $\underline{\mathbf{c}} = \mathbf{0}$ by Theorem \ref{cor:gen}). We
  now prove that every evolution of model \eqref{eq:model} starting at
\begin{equation}
\label{eq:init_stab}
\begin{split}
  &c_i(\tau) = 0 \text{ for } \tau \in [-\max_{i,j} T_{ij},\,0)\\
  &v_i(\tau) = \underline{v}_i  \text{ for } \tau \in [-\max_{i,j} T_{ij},\,0)\\
  &r_i(\tau) = \underline{r}_i  \text{ for } \tau \in [-\max_{i,j} T_{ij},\,0)\\
  &(\mathbf{c}(0), \mathbf{v}(0), \mathbf{r}(0)) \text{ such that } (1) \, \, 0\leq c_i(0) < v_i(0) \, \,  \forall i, \\
  &\qquad \qquad (2) \, \,0<r_i(0) \,\, \forall i, (3)\, \, \sum v_i(0) =  V - V_{\alpha},\\
  & \qquad \qquad \text{and } (4) \, \,  \sum r_i(0) =  R - R_{\alpha,\beta}
\end{split}
\end{equation}
has a limit which belongs to the equilibrium set. The claim of the theorem will
then be an easy consequence of this statement.

We start by observing the following fact. Assume that $v_i(\tau)> 0$ and $r_i(\tau)>0$
for all $\tau\in [-\max_{i,j}T_{ij}, \, t]$, then at time $t$ the
differential equations read
$\dot{c}_i(t) = (\lambda_i - \mu_i)H(c_i(t))$, for all  $i\in \mathcal N$;
recalling that, by Theorem \ref{cor:gen}, it must hold $-\lambda_i+\sum_{j\neq i}\lambda_j p_{ji} - \gamma_i
+ \sum_{j\neq i}\alpha_{ji} = 0$, one can write
\begin{equation*}
\begin{split}
\dot{v}_i(t) &=  -\lambda_i + (\lambda_i - \mu_i)H(c_i)  + \sum_{j\neq i}p_{ji}\Bigl(\lambda_j - \\
&\qquad  \qquad  (\lambda_j - \mu_j)H(c_j^i) \Bigr) - \gamma_i + \sum_{j\neq i} \alpha_{ji}\\
&= (\lambda_i - \mu_i)H(c_i) -  \sum_{j\neq i}p_{ji}  (\lambda_j - \mu_j)H(c_j^i)\\
&\geq  (\lambda_i - \mu_i)H(c_i)  ,\quad  \text{for all } i\in \mathcal N.
\end{split}
\end{equation*}
Also, since by Theorem \ref{cor:gen}, it must hold - $\sum_{j\neq i}(\alpha_{ij} - \alpha_{ji}) + \sum_{j\neq i}(\beta_{ji} - \beta_{ij}) = 0$, one can write
\begin{equation*}
\begin{split}
\dot{r}_i(t) &=  -\sum_{j\neq i} (\alpha_{ij} + \beta_{ij}) + \sum_{j\neq i} (\alpha_{ji} + \beta_{ji})=0.
\end{split}
\end{equation*}

Since $v_i(\tau)>0$ for all $\tau\in [-\max_{i,j}T_{ij}, \, 0]$, and since $v_i(0) >c_i(0)$ for all $i\in \mathcal N$, we conclude that no $v_i(t)$ and $r_i(t)$ can reach the value $0$ \emph{before} the corresponding number of customers $c_i(t)$ has reached the value $0$. However, once $c_i(t)$ reaches the value $0$ (after a time interval $c_i(0)/(\mu_i - \lambda_i)$), the time derivative 
$\dot v_i(t)$ is larger than or equal to zero. This implies that when the initial conditions satisfy \eqref{eq:init_stab}, then $v_i(t)>0$ and $r_i(t)>0$ for all $t\geq0$.

Since $v_i(t)>0$ and $r_i(t)>0$ for all $t\geq0$, and since this implies that
$\dot{c}_i(t) = (\lambda_i - \mu_i)H(c_i(t))$ for all $i\in \mathcal
N$ and $t\geq 0$, we conclude that all $c_i(t)$ will be equal to zero
for all $t\geq T^{\prime}:=\max_{i}\, c_i(0)/(\mu_i-\lambda_i)$. Then,
for $t\geq T^\prime+\max_{ij}T_{ij}=:T^{\prime \prime}$ the
differential equations become:
$\dot{c}_i(t) = 0$,
$\dot{v}_i(t) =  0$, $\dot{r}_i(t) =  0$.

Collecting the results obtained so far, we have that $\lim_{t\to
  +\infty}c_i(t) = 0$ for all $i \in \mathcal N$. Moreover, since
$\dot v_i(t)=0$ and $\dot r_i(t)=0$ for all $t\geq T^{\prime \prime}$, the limits
$\lim_{t\to +\infty } v_i(t)$ and $\lim_{t\to +\infty } r_i(t)$ exist. Finally, one has $ v_i(t) =
v_i(0) + \int_{0}^{t}\, \dot v_i(\tau)\, d\tau\geq v_i(0) +
\int_{0}^{t}\, \dot c_i(\tau)\, d\tau = v_i(0)+c_i(t) - c_i(0)$.
Since $v_i(0) >c_i(0)$, we conclude that $\lim_{t\to+\infty}v_i(t)>0$. Also, $\dot r_i(t)=0$ for all $t$, hence $\lim_{t\to+\infty}r_i(t)>0$.
Thus any solution with initial conditions \eqref{eq:init_stab} has a
limit which belongs to $\mathcal E_{\alpha, \beta}$ (the properties $\lim_{t
  \to +\infty} \sum v_i(t) = V-V_{\alpha}$ and $\lim_{t
  \to +\infty} \sum r_i(t) = R-R_{\alpha,\beta}$ are guaranteed by the
invariance property in Proposition~\ref{thrm:inv} and the assumptions
$\sum v_i(0) = V-V_{\alpha}$ and $\sum r_i(0) = R-R_{\alpha, \beta}$).

Let $\psi_i:=\min(\underline{r_i}, \underline{v_i}\, \sin\frac{\pi}{4})$,
and let $\psi_{\text{min}}:=
\min_i \, \psi_i$. Then, from simple a geometric argument and from the definitions of $\psi_i$ and
$\psi_{\text{min}}$, it follows that if one chooses $\delta =
\psi_{\text{min}}$, then any solution of model \eqref{eq:model} with
initial conditions satisfying \eqref{eq:init_stab_0} has a limit which
belongs to the equilibrium set. This concludes the proof.
\end{proof}

\section{Optimal Rebalancing}
\label{sec:opt_reb}

Our objective is to find a rebalancing assignment $(\alpha,\beta)$
that simultaneously minimizes the number of rebalancing vehicles
traveling in the network and the number of rebalancing drivers needed,
while ensuring the existence of (locally) stable equilibria for model
\eqref{eq:model}. From the previous section, we already know that the
set of assignments ensuring the existence of stable equilibria is
$\mathcal A\times \mathcal B$ (provided that the total number of
vehicles $V$ and drivers $R$ is large enough).

The time-average number of rebalancing vehicles traveling in the
network is simply given by $\sum_{i,j}T_{ij} \alpha_{ij}$. Note that
in minimizing this quantity we are also minimizing the lower bound on
the necessary number of vehicles $V_{\alpha}$.  The time-average
number of drivers in the network is given by $\sum_{i,j} T_{ij}
(\alpha_{ij}+\beta_{ij})$.  Note that in minimizing this quantity we
are minimizing the lower bound on the necessary number of drivers
$R_{\alpha,\beta}$.

Combining the two objectives with the existence of stable equilibria
constraints in~\eqref{eq:cont_constr}
and~\eqref{eq:cont_constr2}), we obtain the following optimization:
\begin{align*}
  \text{minimize} \;\;& \sum_{i,j} T_{ij} \alpha_{ij} \;\text{and} \; \sum_{i,j}T_{ij}(\alpha_{ij}+\beta_{ij})& \\
  \text{subject to} \;\;& \sum_{j\neq i} (\alpha_{ij} - \alpha_{ji}) =
  D_i
  &\forall\; i\in\mathcal{N}\\
  & \sum_{j\neq i} (\beta_{ij} - \beta_{ji}) = -D_i
  &\forall\; i\in\mathcal{N} \\
  & 0 \leq \alpha_{ij} & \forall\; i,j\in\mathcal{N},\\
  & 0 \leq \beta_{ij} \leq f_{ij}\lambda_i p_{ij} & \forall\;
  i,j\in\mathcal{N},
\end{align*}
where $D_i = \lambda_i + \sum_{j\neq i} \lambda_j p_{ji}$, and the
optimization variables are $\alpha_{ij}$ and $\beta_{ij}$, where $i,
j\in \mathcal N$.  The constraints ensure that the optimization is
over the set $\mathcal A\times \mathcal B$.  Note, however, that this
optimization can be decoupled into an optimization over $\alpha$ and
an optimization over $\beta$.  Both optimizations are minimum cost
flow problems~\cite{BK-JV:07}.  The $\alpha$ optimization is identical
to that presented in~\cite{SLS-MP-EF-DR:11a}:
\begin{align*}
  \text{minimize} \;\;& \sum_{i,j} T_{ij} \alpha_{ij} & \\
  \text{subject to} \;\;& \sum_{j\neq i} (\alpha_{ij} - \alpha_{ji}) =
  D_i
  &\forall\; i\in\mathcal{N}\\
  & \alpha_{ij} \geq 0 & \forall\; i,j\in\mathcal{N}.
\end{align*}
The $\beta$ optimization then looks as follows:
\begin{align*}
  \text{minimize} \;\;& \sum_{i,j} T_{ij} \beta_{ij} & \\
  \text{subject to} \;\;& \sum_{j\neq i} (\beta_{ij} - \beta_{ji}) =
  -D_i
  &\forall\; i\in\mathcal{N}\\
  & 0 \leq \beta_{ij} \leq f_{ij}\lambda_i p_{ij} & \forall\; i,j\in\mathcal{N}.
\end{align*}
The $\alpha$ optimization is an uncapacitated minimum cost flow
problem and thus is always feasible.  In
Proposition~\ref{prop:feas_exist} we give conditions on the $f_{ij}$
fractions in order for the $\beta$ optimization to be feasible.  

The rebalancing policy is then given by solving the two minimum cost
flow problems to obtain solutions $\alpha_{ij}^*$ and $\beta_{ij}^*$.
We then send empty rebalancing vehicles (along with drivers) from
station $i$ to station $j$ at a rate of $\alpha^*_{ij}$ (when vehicles
and drivers are available at station $i$).  In addition, we send
drivers on customer-carrying vehicles from $i$ to $j$ at a rate of
$\beta^*_{ij}$ (when customers and vehicles are available at station
$i$).

\section{Simulations}
\label{sec:sim}

\begin{figure*}
\centering
\includegraphics[width=0.48\linewidth]{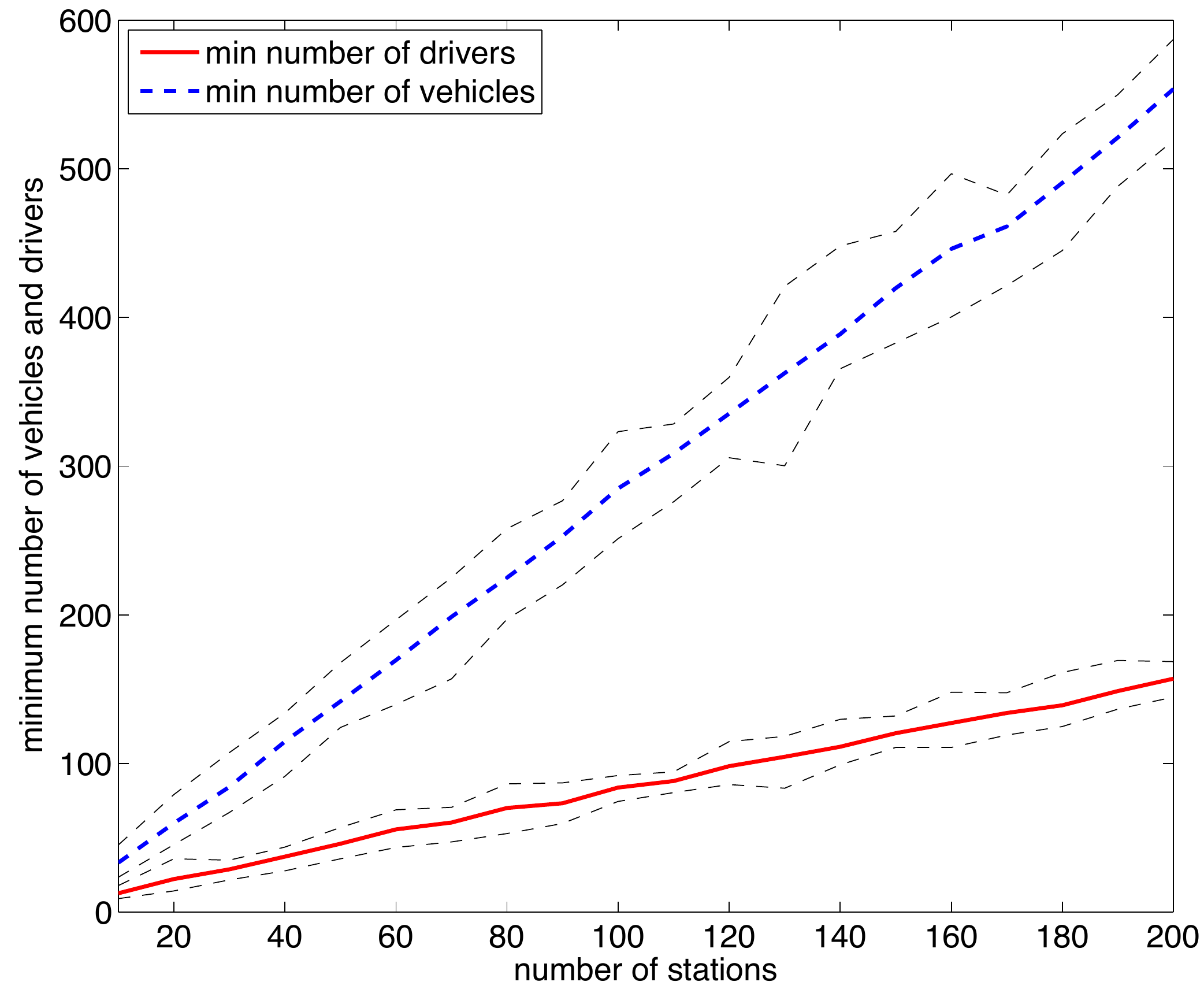} \hfill
\includegraphics[width=0.48\linewidth]{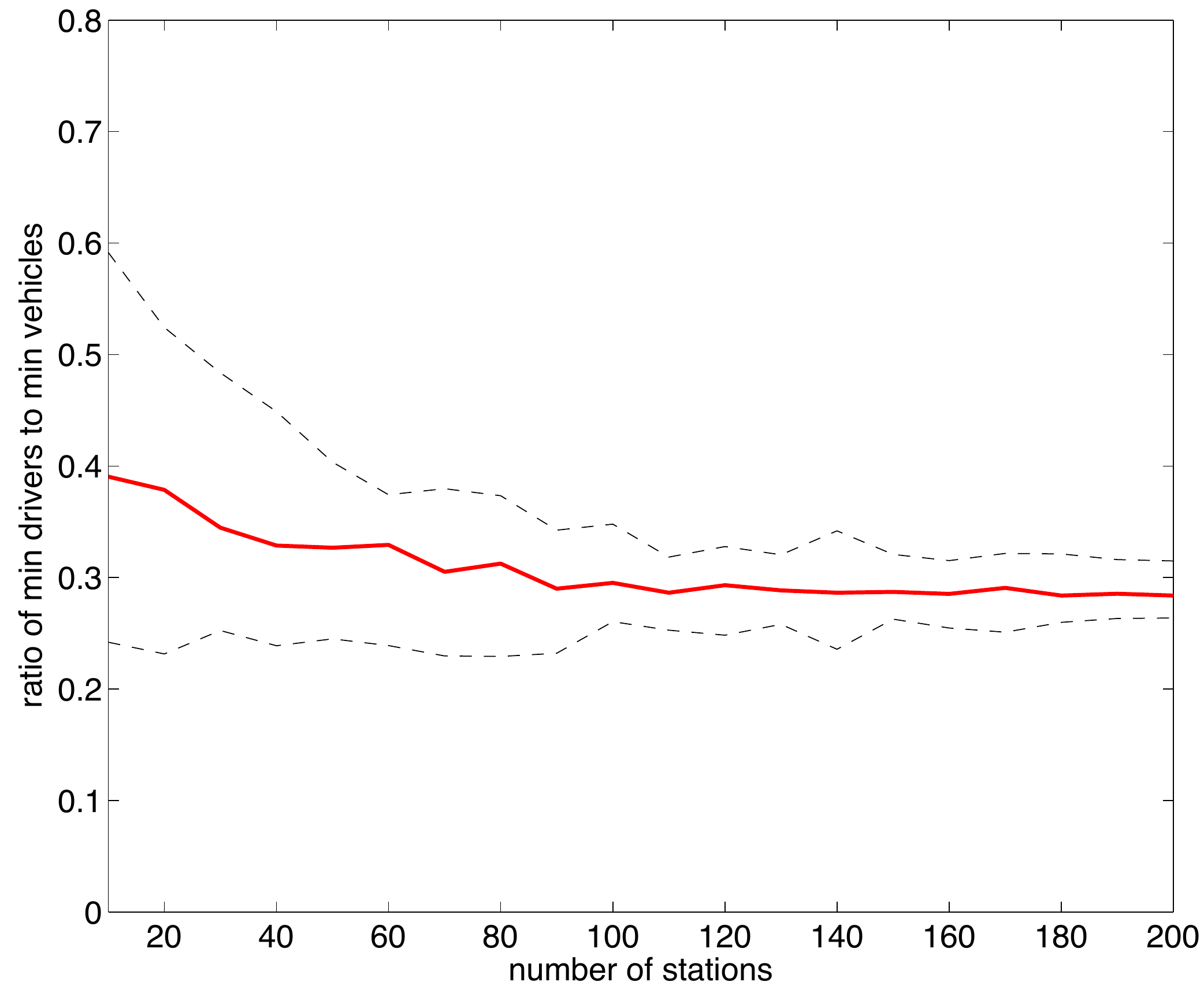} \vskip1em
\includegraphics[width=0.48\linewidth]{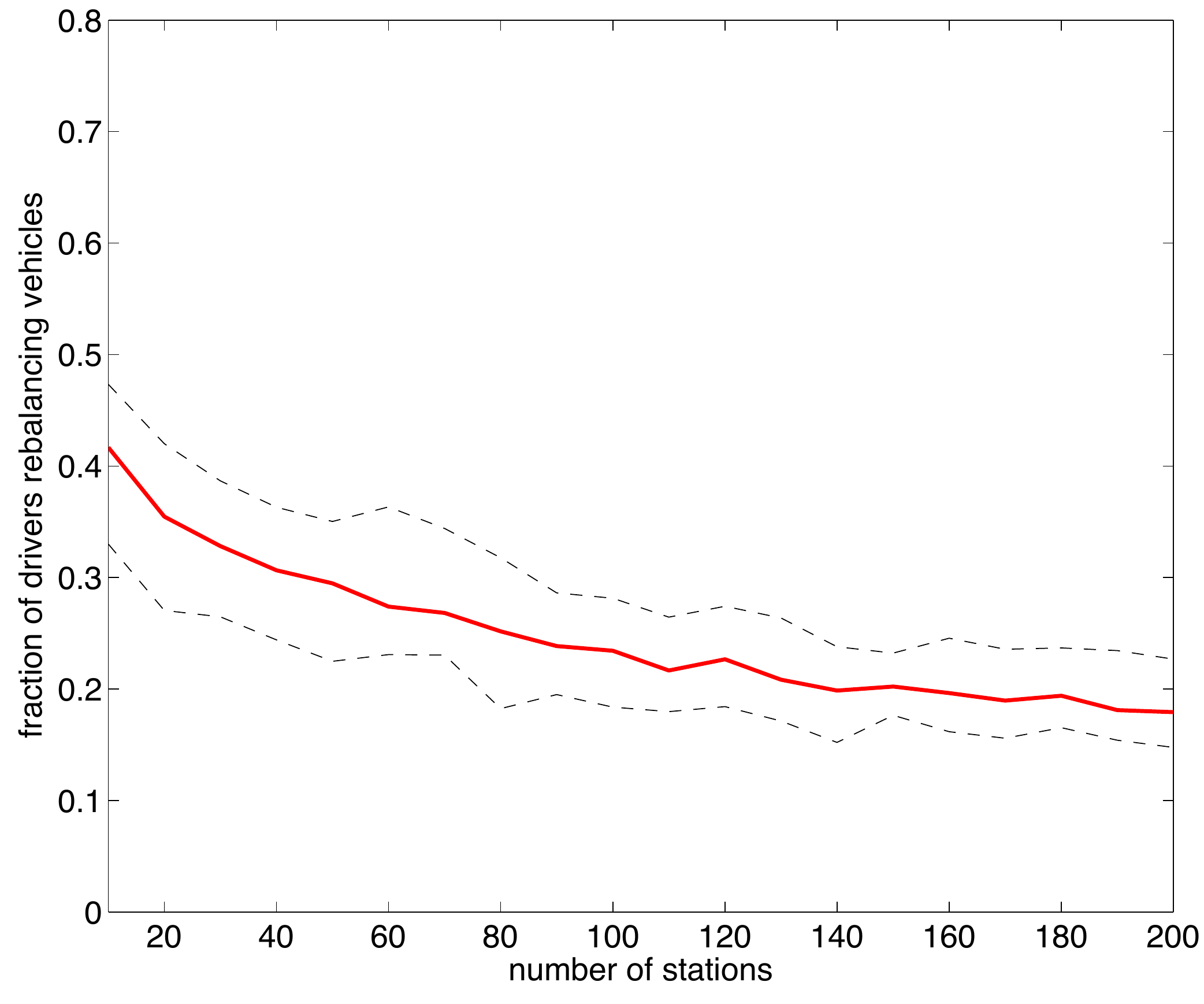}
\caption{Left figure:  The minimum number of vehicles and drivers.
  Middle figure:  The ratio between the minimum number of drivers and
  number of vehicles.  Right figure:  The fraction of drivers that are
performing vehicle rebalancing trips.  For each fixed number of
stations, $20$ trials were performed.  Thick lines show the mean of
the $20$ trials while thin dashed lines show the maximum and minimum
over the trials.}
\label{fig:vehicles_and_drivers}
\end{figure*}

In this section we study the relation between the minimum number of
drivers needed for stability $R_{\alpha,\beta}$ and the minimum number
of vehicles needed $V_{\alpha}$ from Theorem~\ref{cor:gen}. To
evaluate these quantities, we need to generate sample data consisting
of arrival rates $\lambda_i$ at each station $i$, customer destination
probabilities $p_{ij}$, travel times between stations $T_{ij}$, and
the fraction of customers $f_{ij}$ traveling from $i$ to $j$ that are
willing to be driven by a driver.  We generate this data as follows:
We uniformly randomly place $n$ stations in a $100\times 100$
environment, and calculate the travel times $T_{ij}$ as the Euclidean
distance between stations.  We uniformly randomly generate the arrival
rates $\lambda_i$ on the interval $[0,0.05]$ arrivals per time unit.
Similarly we uniformly randomly generate the destination probabilities
$p_{ij}$ such that they are nonnegative and $\sum_j p_{ij} =1$ for
each station $i$.  Finally, we assume that $f_{ij} = 1$ for each pair
of stations in order to avoid issues with feasibility.

To solve the optimizations in Section~\ref{sec:opt_reb} for the
optimal assignment $(\alpha^*,\beta^*)\in\mathcal A \times \mathcal B$, we
use the freely available SeDuMi (Self-Dual-Minimization) toolbox.

Figure~\ref{fig:vehicles_and_drivers} shows results for numbers of
stations ranging from $10$ up to $200$.  For each number of stations
we generate $20$ random problem instances of the form described above.
The thick line in each plot shows the mean over the $20$ trials while
the thin dashed lines show the maximum and minimum values.  The left
figure shows how $V_{\alpha^*}$ and $R_{\alpha^*,\beta^*}$ vary with
the number of stations. The middle figure shows the ratio
$R_{\alpha^*,\beta^*}/V_{\alpha^*}$ as a function of the number of
stations.  We can see that we need between $1/3$ and $1/4$ as many
drivers as we do vehicles.  The right figure shows the ratio between
the minimum number of rebalancing vehicles in transit and the number
of drivers.  This gives a measure of the fraction of drivers that are
driving rebalancing vehicles (versus rebalancing themselves).  It is
interesting to note that this ratio is quite low, reaching
approximation $1/5$ for $200$ stations.

\begin{figure*}
\centering
\includegraphics[width=0.45\linewidth]{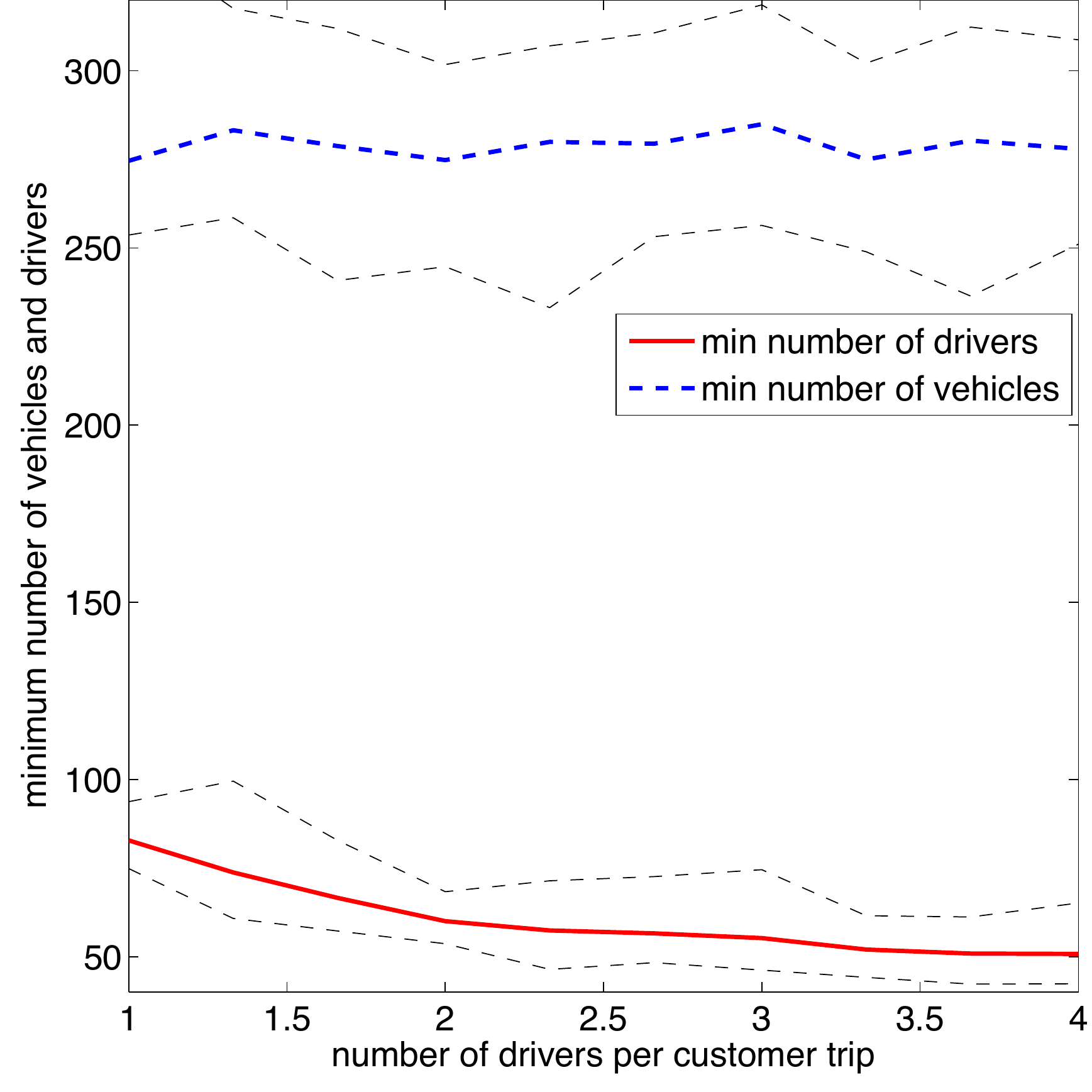} \hfill
\includegraphics[width=0.45\linewidth]{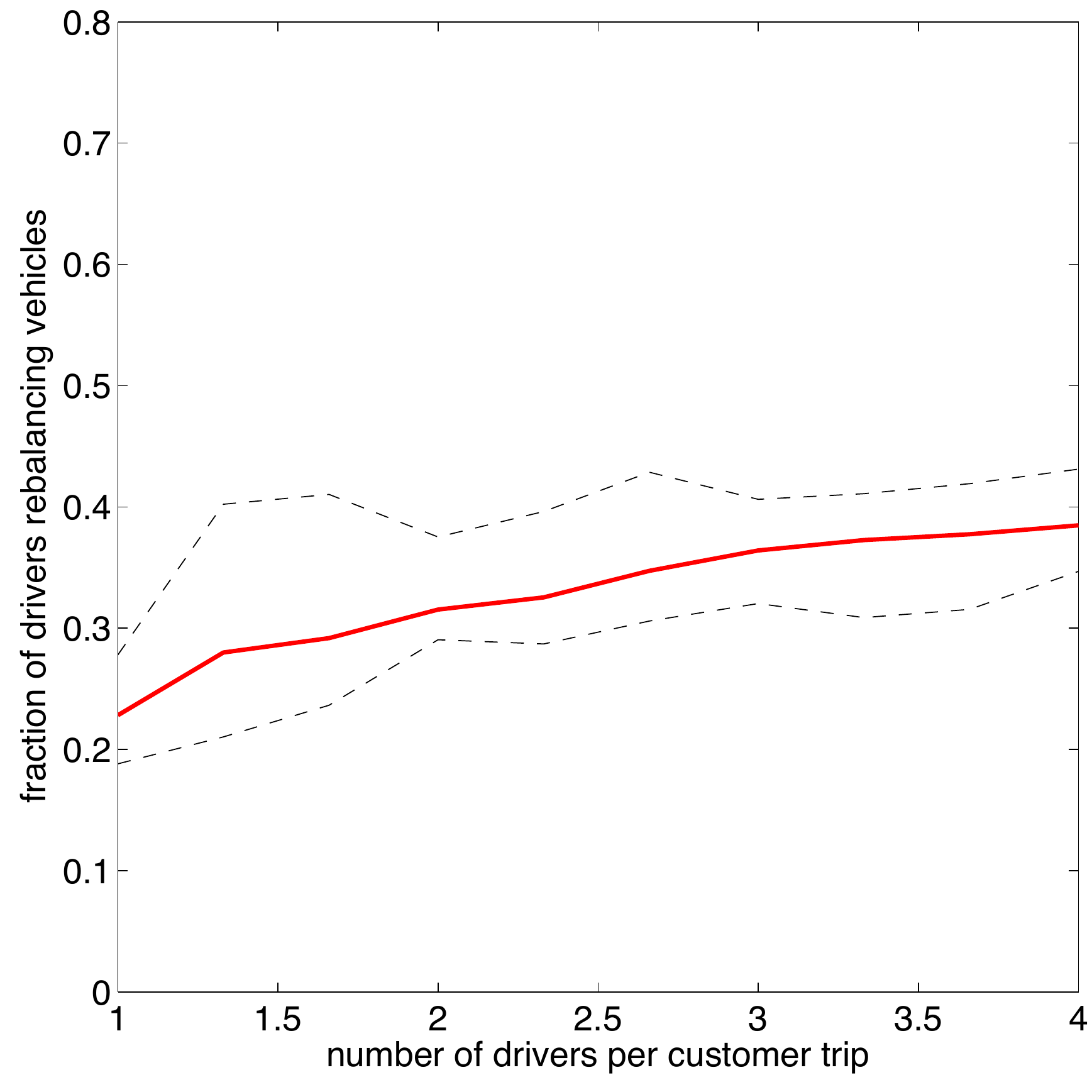} 
\caption{Increasing the number of drivers per customer trip for $100$
  station problems.  Left figure: The minimum number of vehicles and
  drivers.  Right figure: The fraction of drivers that are performing
  vehicle rebalancing trips.}
\label{fig:fij_exp}
\end{figure*}
One way to increase the fraction of drivers performing vehicle
rebalancing is to allow multiple drivers to take a trip with a
customer.  This allows drivers to take more efficient routes back to
stations that are in need of drivers.  In our model it corresponds to
setting $f_{ij} >1$.  This is explored in Figure~\ref{fig:fij_exp}
where we range $f_{ij}$ from $1$ to $4$ for $20$ problem instances
on $100$ stations.  We can see that as we increase $f_{ij}$ from $1$
to $4$, the number of drivers decreases from approximately $80$ to
$50$, and the fraction of drivers performing vehicle rebalancing increases
from under $1/4$ to nearly~$2/5$.

\section{Conclusions}
\label{sec:conc}

In this paper we studied the problem of rebalancing the rebalancers in
a mobility-on-demand system, which blends customer-driven vehicles
with a taxi service. For a fluid model of the system, we showed that
the optimal rebalancing policy can be found as the solution of two
linear programs.  Also, we showed that in Euclidean network topologies
one would need between 1/3 and 1/4 as many drivers as vehicles, and
that this fraction decreases to about 1/5 if one allows up to 3-4
drivers to take a trip with a customer. These results could have an
immediate impact on existing one-way car-sharing systems such as
Car2Go. For future work we plan to analyze a stochastic queueing model
and study the time-varying case whereby the system's parameters change
periodically (thus modeling the day/night variations). Also, we plan
to develop real-time rebalancing policies that do not require any a
priori information, and to enrich our model by including uncertainty
in the travel times, time windows for the customers, and capacity
constraints for the roads. Finally, we are interested in using dynamic
pricing to provide incentives for customers to perform rebalancing
trips themselves.

\section*{Funding}
This research was supported by the Future Urban Mobility project of
the Singapore-MIT Alliance for Research and Technology (SMART) Center,
with funding from Singapore's National Research Foundation; and by the
Office of Naval Research [grant number N000140911051].


\newcommand{\noopsort}[1]{} \newcommand{\printfirst}[2]{#1}
  \newcommand{\singleletter}[1]{#1} \newcommand{\switchargs}[2]{#2#1}

\end{document}